\newtheorem{thm}{Theorem}[section]
\newtheorem{cor}[thm]{Corollary}
\theoremstyle{definition}
\newtheorem{dfn}[thm]{Definition}
\newtheorem{rmk}[thm]{Remark}
\numberwithin{equation}{section}
\numberwithin{equation}{section}
\newcommand{\cK}{\mathcal{K}}
\newcommand{\cL}{\mathcal{L}}
\newcommand{\cR}{\mathcal{R}}
\newcommand{\cB}{\mathcal{B}}
\newcommand{\minotimes}{\otimes_{{\rm min}}}
\newcommand{\op}{{\rm op}}
\newcommand{\RH}{H_{\mathbb{R}}}
\title[On the isomorphism class of $q$-Gaussian W$^\ast$-algebras for infinite variables]{On the isomorphism class of $q$-Gaussian W$^\ast$-algebras for infinite variables}
\date{\noindent \today.  \\
 {\it MSC2010}: 46L35, 46L06.   {\it Keywords}: $q$-Gaussian von Neumann algebras, Akemann-Ostrand property.\\
  MC is supported by the NWO Vidi grant VI.Vidi.192.018 `Non-commutative harmonic analysis and rigidity of operator algebras'.
 }
\author[Martijn Caspers]{Martijn Caspers}
\address{Martijn Caspers, TU Delft, EWI/DIAM,
	P.O.Box 5031,
	2600 GA Delft,
	The Netherlands}
\email{M.P.T.Caspers@tudelft.nl}
\begin{document}

\maketitle

\begin{abstract}
Let $M_q(\RH)$ be the $q$-Gaussian von Neumann algebra associated with a separable infinite dimensional real Hilbert space $\RH$  where $-1 < q < 1$. We show that $M_q(\RH) \not \simeq M_0(\RH)$ for $-1 < q \not = 0 < 1$. The C$^\ast$-algebraic counterpart of this result was obtained recently in \cite{BCKW}.  Using ideas of Ozawa we show that this non-isomorphism result also holds on the level of von Neumann algebras.
\end{abstract}

\section{Introduction}
Von Neumann algebras of $q$-Gaussian variables originate from the work of  Bo\.{z}ejko and Speicher \cite{BS} (see also \cite{BKS}). To a real Hilbert space $\RH$ and a parameter $-1 < q < 1$ it associates a von Neumann algebra $M_q(\RH)$. At parameter $q=0$ this assignment $\RH \mapsto M_q(\RH)$ is known as Voiculescu's free Gaussian functor. The dependence of $q$ of these von Neumann algebras has been an intriguing and very difficult problem. A breakthrough result in this direction was obtained by Guionnet-Shlyakhtenko \cite{GS} who showed that for finite dimensional $\RH$ for a range of $q$ close to 0 all von Neumann algebras $M_q(\RH)$ are isomorphic. The range for which isomorphism is known decreases as the dimension $\RH$ becomes larger. The Guionnet-Shlyakhtenko approach is based on free analogues of (optimal) transport techniques. Their result also relies on existence and power series estimates of conjugate variables obtained by Dabrowski \cite{Dabrowski}. In fact the free transport techniques provide even an isomorphism result of underlying $q$-Gaussian C$^\ast$-algebras.

In case $\RH$ is infinite dimensional the isomorphism question of $q$-Gaussian algebras was addressed by Nelson and Zeng \cite{NelsonZeng}. They showed that for {\it mixed} $q$-Gaussians for which the array  $(q_{ij})_{ij}$ of commutation coefficients decays fast enough to 0 one obtains isomorphism of mixed $q$-Gaussian C$^\ast$- and von Neumann algebras. However, the isomorphism question for the original (non-mixed) $q$-Gaussians remained open, see Questions 1.1 and 1.2 of \cite{NelsonZeng}.  In \cite{BCKW} we showed that on the level of C$^\ast$-algebras there exists a non-isomorphsim result. In the current note we improve on this result: we show that for an infinite dimensional separable real Hilbert space $\RH$ and $-1 < q <1, q \not = 0$ we have $M_q(\RH) \not \simeq M_0(\RH)$.   This then fully answers Questions 1.1 and 1.2 of \cite{NelsonZeng} and provides a stark contrast to the results of Guionnet-Shlyakhtenko for  finite dimensional $\RH$.

The distinguishing property of $M_q(\RH)$ and $M_0(\RH)$ is a variation of the Akemann-Ostrand property that was suggested in a note by Ozawa \cite{OzawaNote} (see also \cite{PetersonEtAl}) and which we shall call W$^\ast$AO. We formally define it in Definition \ref{Eqn=WAO}. The most important novelty is that we quotient $\cB(L^2(M))$ by the C$^\ast$-algebra $\cK_M$ which is much larger than the ideal of compact operators on $L^2(M)$. This larger quotient turns out to provide von Neumann algebraic descriptions of the Akemann-Ostrand property \cite{OzawaNote}. We use this to distinguish  $M_q(\RH)$ and $M_0(\RH)$.

\vspace{0.3cm}

\noindent {\bf Acknowledgements:} The author thanks Mateusz Wasilewski, Changying Ding and Cyril Houdayer for comments on an earlier draft of this paper.

\section{Preliminaries}
 $\cB(X, Y)$ denotes the bounded operators between Banach spaces $X \rightarrow Y$.  $\mathcal{K}(X, Y)$  denotes the compact operators, meaning that they map the unit ball to a relatively compact set. We set $\cB(X) := \cB(X, X)$ and $\cK(X) := \cK(X, X)$.

The algebraic  tensor product (vector space tensor product) is denoted by $\otimes_{{\rm alg}}$   and $\minotimes$ is the minimal tensor product of C$^\ast$-algebras. $\otimes$ is used for tensor products of elements.

We refer to \cite{Takesaki1} as a standard reference on von Neumann algebras.  For a von Neumann algebra $M$ we denote by $(M, L^2(M), J, L^2(M)^+)$ its standard form. For $x \in M$ we write $x^{\op} := J x^\ast J$ which is the right multiplication with $x$ on the standard space.
  For a finite von Neumann algebra $M$ with trace $\tau$ we have $M \subseteq L^2(M)$ where $L^2(M)$ is the completion of $M$ with respect to the inner product $\langle x, y \rangle = \tau(y^\ast x)$. Therefore every $T \in \cB(L^2(M))$ determines a map $Q_0(T) \in \cB(M, L^2(M))$ given by $x \mapsto T(x)$. Set
\[
Q_1: \cB(L^2(M)) \rightarrow \cB(M, L^2(M)) \slash \cK(M, L^2(M)): T \mapsto Q_0(T) + \cK(M, L^2(M)).
\]
 $Q_1$ is clearly continuous and we define the closed left-ideal $\cK_{M}^L = \ker(Q_1)$ and the hereditary C$^\ast$-subalgebra $\cK_M = (\cK_{M}^L)^\ast \cap \cK_{M}^L$ of $\cB(L^2(M))$ (see also \cite{OzawaNote}). We let $\mathcal{M}(\cK_M) \subseteq \cB(L^2(M))$ be the multiplier algebra of $\cK_M$; indeed this multiplier algebra is faithfully   represented on $L^2(M)$  by \cite[Proposition 2.1]{Lance}. Then $\cK_M$ is an ideal in the C$^\ast$-algebra $\mathcal{M}(\cK_M)$. We have $M \subseteq  \mathcal{M}(\cK_M)$ and $M^\op \subseteq \mathcal{M}(\cK_M)$.

\subsection{A von Neumann version of the Akemann-Ostrand property}

\begin{dfn}
Let $M$ be a finite von Neumann algebra. We say that $M$ has W$^\ast$AO if the map
\begin{equation}\label{Eqn=WAO}
\theta: M \otimes_{{\rm alg}} M^{\op} \rightarrow \mathcal{M}(\cK_M)  \slash \cK_M: a \otimes b^{\op} \mapsto a b^{\op} + \cK_M.
\end{equation}
is continuous with respect to the minimal tensor norm and thus extends to a $\ast$-homomorphism $M \minotimes  M^{\op} \rightarrow  \mathcal{M}(\cK_M)  \slash \cK_M$.
\end{dfn}

We recall the following from \cite[Section 4]{OzawaNote}.
Let $\Gamma$ be a discrete group and let $\cL(\Gamma)$ and $\cR(\Gamma)$ be the left and right group von Neumann algebra respectively acting on $\ell^2(\Gamma)$. In this case $L^2(\cL(\Gamma)) \simeq \ell^2(\Gamma)$ as bimodules with the natural left and right actions of $\cL(\Gamma)$ and $\cR(\Gamma)$ on $\ell^2(\Gamma)$. We have $J \delta_s = \delta_{s^{-1}}$ which extends to an antilinear isometry on $\ell^2(\Gamma)$. Then $\cR(\Gamma) = J \cL(\Gamma) J$.

 Assume $\Gamma$ is icc so that $\cL(\Gamma)$ and $\cR(\Gamma)$ are factors, i.e. $\cL(\Gamma) \cap \cR(\Gamma) = \mathbb{C} 1$.   The map
\[
\pi: C^\ast( \cL(\Gamma),  \cR(\Gamma)     ) \rightarrow \cB(\ell^2(\Gamma) \otimes \ell^2(\Gamma)): a b^{\op} \mapsto a \otimes b^{\op}, \qquad a \in \cL(\Gamma), b^{\op} \in \cR(\Gamma).
\]
is a well-defined $\ast$-homomorphism by Takesaki's theorem on minimality of the spatial tensor product. In \cite[Section 4, Theorem]{OzawaNote} Ozawa showed the following theorem.

\begin{thm}\label{Thm=Ozawa}
Let $\Gamma$ be an exact icc group such that the $\ast$-homomorphism
\[
C_r^\ast(\Gamma) \otimes_{{\rm alg}} C_r^\ast(\Gamma)^{\op} \rightarrow \cB(\ell^2(\Gamma)) \slash \cK( \ell^2(\Gamma) ): a \otimes b^{\op} \mapsto a b^{\op} + \cK( \ell^2(\Gamma) ),
\]
is continuous with respect to $\minotimes$. Then $\cL(\Gamma)$ has W$^\ast$AO.
\end{thm}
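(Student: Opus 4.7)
The plan is to transfer the hypothesized Akemann-Ostrand property for $C_r^\ast(\Gamma)$ up to W$^\ast$AO for $\cL(\Gamma)$, by first observing that the ideal $\cK_{\cL(\Gamma)}$ is substantially larger than the compacts and then performing a Kaplansky density approximation at the minimal tensor product level.

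First I would verify that $\cK(\ell^2(\Gamma)) \subseteq \cK_{\cL(\Gamma)}$. Given $T \in \cK(\ell^2(\Gamma))$, its restriction $T|_{\cL(\Gamma)}: \cL(\Gamma) \to \ell^2(\Gamma)$ is automatically compact because the inclusion $\cL(\Gamma) \hookrightarrow \ell^2(\Gamma)$ is norm-bounded, so $T \in \cK_{\cL(\Gamma)}^L$ and, since the compacts are selfadjoint, $T \in \cK_{\cL(\Gamma)}$. Composing the induced quotient $\cB(\ell^2(\Gamma))/\cK(\ell^2(\Gamma)) \twoheadrightarrow \cB(\ell^2(\Gamma))/\cK_{\cL(\Gamma)}$ with the hypothesis yields a continuous $\ast$-homomorphism
\[
\pi_r: C_r^\ast(\Gamma) \minotimes C_r^\ast(\Gamma)^{\op} \to \cM(\cK_{\cL(\Gamma)})/\cK_{\cL(\Gamma)},
\]
whose image lies in the multiplier quotient since $\cL(\Gamma), \cR(\Gamma) \subseteq \cM(\cK_{\cL(\Gamma)})$, and which agrees with the algebraic product map on $C_r^\ast(\Gamma) \otimes_{{\rm alg}} C_r^\ast(\Gamma)^{\op}$.

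To upgrade $\pi_r$ to a $\minotimes$-continuous map on $\cL(\Gamma) \otimes_{{\rm alg}} \cL(\Gamma)^{\op}$, I would invoke Kaplansky density. For $T = \sum_i a_i \otimes b_i^{\op}$ with $\|T\|_{\min} \leq 1$ (which coincides with the spatial operator norm on $\ell^2(\Gamma) \otimes \ell^2(\Gamma)$), the SOT-dense inclusion $C_r^\ast(\Gamma) \otimes_{{\rm alg}} C_r^\ast(\Gamma)^{\op} \subseteq \cL(\Gamma) \barotimes \cL(\Gamma)^{\op}$ furnishes, via Kaplansky, a bounded net $T_\lambda$ in the algebraic tensor product with $\|T_\lambda\|_{\min} \leq 1$ and $T_\lambda \to T$, $T_\lambda^\ast \to T^\ast$ in SOT. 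Setting $B := \cM(\cK_{\cL(\Gamma)})/\cK_{\cL(\Gamma)}$, the first step gives $\|\pi_r(T_\lambda)\|_B \leq \|T_\lambda\|_{\min} \leq 1$, and a short SOT calculation (using boundedness of the nets and the product rule for SOT limits) shows that the associated products $\pi(T_\lambda) \in \cB(\ell^2(\Gamma))$ converge to $\pi(T) = \sum_i a_i b_i^{\op}$ in SOT.

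The main obstacle is converting this SOT convergence into norm control in the quotient $B$, and this is where the exactness of $\Gamma$ becomes essential. The plan is to pass to a weak-$\ast$ cluster point in the double dual $B^{\ast\ast}$: the bounded net $\pi_r(T_\lambda)$ admits such a cluster point $X \in B^{\ast\ast}$ with $\|X\| \leq 1$, and the SOT convergence $\pi(T_\lambda) \to \pi(T)$ in $\cB(\ell^2(\Gamma))$, combined with the bidual extensions of the embeddings $\cL(\Gamma), \cR(\Gamma) \hookrightarrow B$, should force $X$ to equal the bidual image of $\sum_i a_i \otimes b_i^{\op}$. Exactness of $\Gamma$, equivalently weak exactness of $\cL(\Gamma)$, is precisely what makes these bidual extensions normal and compatible with the SOT on $\cB(\ell^2(\Gamma))$; without it, the cluster point need not be uniquely determined. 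Concluding $\|\theta(T)\|_B = \|X\| \leq 1$ then establishes W$^\ast$AO for $\cL(\Gamma)$.
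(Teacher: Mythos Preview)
Your approach is genuinely different from the paper's. The paper does not attempt any approximation argument at all: it simply invokes \cite[Section 4, Theorem]{OzawaNote}, which (under the stated hypotheses) identifies $\ker\pi = \cK_{\cL(\Gamma)} \cap C^\ast(\cL(\Gamma),\cR(\Gamma))$ for the \emph{surjective} $\ast$-homomorphism $\pi: C^\ast(\cL(\Gamma),\cR(\Gamma)) \to \cL(\Gamma)\minotimes\cR(\Gamma)$, $ab^{\op}\mapsto a\otimes b^{\op}$. The first isomorphism theorem then gives the inverse map $\cL(\Gamma)\minotimes\cR(\Gamma) \xrightarrow{\simeq} C^\ast(\cL(\Gamma),\cR(\Gamma))/(\cK_{\cL(\Gamma)}\cap C^\ast(\cL(\Gamma),\cR(\Gamma)))\hookrightarrow \mathcal{M}(\cK_{\cL(\Gamma)})/\cK_{\cL(\Gamma)}$, which is precisely $\theta$. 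All of the analytic work (including the use of exactness) is buried in Ozawa's kernel computation; the paper just quotes it.

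Your Kaplansky/bidual route has a real gap at the step you flag as ``the main obstacle''. There are two ways to run the approximation, and neither closes:
\begin{itemize}
\item If you approximate $T$ globally by $T_\lambda\in C_r^\ast(\Gamma)\otimes_{\rm alg} C_r^\ast(\Gamma)^{\op}$ with $\|T_\lambda\|_{\min}\le 1$, you control $\|\pi_r(T_\lambda)\|_B\le 1$, but you have \emph{no} bound on the products $\pi(T_\lambda)\in\cB(\ell^2(\Gamma))$ themselves --- the multiplication map $a\otimes b^{\op}\mapsto ab^{\op}$ is not $\minotimes$-bounded into $\cB(\ell^2(\Gamma))$ (only into the Calkin quotient, by hypothesis). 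So your ``short SOT calculation'' showing $\pi(T_\lambda)\to\pi(T)$ is unjustified: joint SOT-continuity of multiplication needs uniform boundedness, which you do not have.
\item If instead you approximate each leg $a_i,b_i$ separately by bounded nets in $C_r^\ast(\Gamma)$, then the products do converge SOT, but now $\|T_\lambda\|_{\min}$ is only controlled by $\sum_i\|a_i\|\,\|b_i\|$, not by $\|T\|_{\min}$, and you lose the contractivity you need.
\end{itemize}
Your final sentence --- that weak exactness of $\cL(\Gamma)$ makes the bidual embeddings ``normal and compatible with the SOT'' and thereby pins down the cluster point --- is where the actual content lives, and you have asserted rather than proved it. In fact, making that step rigorous \emph{is} essentially the proof of \cite[Section 4, Theorem]{OzawaNote}; the quotient map $\mathcal{M}(\cK_M)\to B$ is not normal, so SOT convergence in $\cB(\ell^2(\Gamma))$ does not transport to weak-$\ast$ convergence in $B^{\ast\ast}$ without a further argument. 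Either supply that argument in full (which amounts to reproving Ozawa's result), or --- as the paper does --- cite it directly and deduce W$^\ast$AO from the kernel identification.
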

\begin{proof}
By  \cite[Section 4, Theorem]{OzawaNote} we have
\[
\ker(\pi) = \mathcal{K}_{\cL(\Gamma)} \cap C^\ast( \cL(\Gamma),  \cR(\Gamma)     ).
\]
Therefore,
\[
\cL(\Gamma) \minotimes \cR(\Gamma) \rightarrow\!\!\!\!\!^{\simeq \pi^{-1}} C^\ast( \cL(\Gamma),  \cR(\Gamma)     )  \slash (\mathcal{K}_{\cL(\Gamma)} \cap C^\ast( \cL(\Gamma),  \cR(\Gamma)     )),
\]
which concludes the theorem.
\end{proof}

\begin{rmk}
It follows that if $\Gamma$ is an icc group that is bi-exact (or said to be in class $\mathcal{S}$, see \cite[Section 15]{BrownOzawa}) then $\cL(\Gamma)$ has W$^\ast$AO.
\end{rmk}

\subsection{$q$-Gaussians}
Let $-1 < q < 1$ and let $\RH$ be a real Hilbert space with complexification $H := \RH \oplus i \RH$. Set the symmetrization operator $P_q^k$ on $H^{\otimes k}$,
\begin{equation} \label{Eqn=Pq}
P_q^k(\xi_1 \otimes \ldots \otimes \xi_n) = \sum_{\sigma \in S_k} q^{i(\sigma)} \xi_{\sigma(1)} \otimes \ldots \otimes \xi_{\sigma(n)},
\end{equation}
where $S_k$ is the symmetric group of permutations of $k$ elements and $i(\sigma) := \# \{ (a,b) \mid a < b, \sigma(b) < \sigma(a) \}$ the number of inversions. The operator $P_q^k$ is positive and  invertible \cite{BS}. Define a new inner product on $H^{\otimes k}$ by
\[
\langle \xi, \eta \rangle_q := \langle P_q^k \xi, \eta \rangle,
\]
and call the new Hilbert space $H_q^{\otimes k}$. Set the Hilbert space direct sum $F_q(H) := \mathbb{C} \Omega \oplus (\oplus_{k=1}^\infty H_q^{\otimes k})$ where $\Omega$ is a unit vector called the vacuum vector. For $\xi \in H$ let
\[
l_q(\xi) ( \eta_1 \otimes \ldots \otimes \eta_k) := \xi \otimes \eta_1 \otimes \ldots \otimes \eta_k, \qquad l_q(\xi) \Omega = \xi,
\]
and then $l_q^\ast(\xi) = l_q(\xi)^\ast$. These `creation'  and `annihilation' operators are bounded and extend to $F_q(H)$. We define a  von Neumann algebra by the double commutant
\[
M_q(\RH) :=   \{ l_q(\xi) + l_q^\ast(\xi) \mid \xi \in \RH \}''.
\]
  Then $\tau_\Omega(x) := \langle x \Omega, \Omega \rangle$ is a faithful tracial state on $M_q(\RH)$ which is moreover normal. Now $F_q(H)$ is the standard form Hilbert space of $M_q(\RH)$ and $J x \Omega = x^\ast \Omega$.
For vectors $\xi_1, \ldots, \xi_k \in H$ there exists a unique operator $W_q(\xi_1 \otimes \ldots \otimes \xi_k) \in M_q(\RH)$ such that
\[
W_q(\xi_1 \otimes \ldots \otimes \xi_k) \Omega = \xi_1 \otimes \ldots \otimes \xi_k.
\]
These operators are called Wick operators. It follows that $W_q(\xi)^\op \Omega = \xi$.

\begin{rmk}\label{Remark=WAOZero}
Let $\mathbb{F}_\infty$ be the free group with countably infinitely many generators. $\mathbb{F}_\infty$ is icc and exact \cite{BrownOzawa} and hence Theorem \ref{Thm=Ozawa} applies. We conclude that $\cL(\mathbb{F}_\infty)$ has W$^\ast$AO. We have that $\cL(F_\infty) \simeq \Gamma_0(\RH)$ with $\RH$ a separable infinite dimensional real Hilbert space (see \cite[Theorem 2.6.2]{DykVoic}) and so  $\Gamma_0(\RH)$ has the W$^\ast$AO.
\end{rmk}

\section{Non-isomorphism of $q$-Gaussian von Neumann algebras}\label{Sect=qGaussian}

 The following theorem provides a necessary condition for W$^\ast$AO.

\begin{thm} \label{Thm=AntiAO}
Let $M$ be a finite von Neumann algebra with finite normal faithful tracial state $\tau$. Suppose there exists a unital von Neumann subalgebra $B \subseteq M$ and infinitely many subspaces $M_i \subseteq M, i \in \mathbb{N}$ that are left and right $B$-invariant and mutually $\tau$-orthogonal in the sense that $\tau(y^\ast x) = 0$ for $x \in M_i, y \in M_j, i\not = j$. Suppose moreover that there exists $\delta > 0$ and finitely many operators $b_j, c_j \in B$, with $ \sum_j b_j \otimes c_j^{\op}$ non-zero,  such that for every $i \in \mathbb{N}$ we have
\begin{equation}\label{Eqn=NonContraction}
\Vert  Q_0(\sum_{j} b_j c_j^{op}) \Vert_{\cB( M_i , L^2(M_i) )}  \geq (1 + \delta)  \Vert   \sum_j b_j \otimes c_j^{\op}  \Vert_{ B \minotimes B^{\op} }.
\end{equation}
Then $M$ does not have W$^\ast$AO.
\end{thm}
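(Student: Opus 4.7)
The plan is to argue by contradiction, assuming $M$ has W$^\ast$AO. Write $T := \sum_j b_j c_j^{\rm op}$ and $\alpha := \| \sum_j b_j \otimes c_j^{\rm op} \|_{B \minotimes B^{\rm op}}$; since $\sum_j b_j \otimes c_j^{\rm op}$ is a nonzero element of the C$^\ast$-algebra $B \minotimes B^{\rm op}$, we have $\alpha > 0$. The strategy is to combine the upper bound $\|T + \cK_M\|_{\mathcal{M}(\cK_M)/\cK_M} \leq \alpha$ coming from W$^\ast$AO with a lower bound $\geq (1+\delta)\alpha$ coming from (\ref{Eqn=NonContraction}).

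The upper bound is immediate: under W$^\ast$AO the $\ast$-homomorphism $\theta$ extends to $M \minotimes M^{\rm op}$ and is therefore automatically contractive, and since $B \subseteq M$ and $B^{\rm op} \subseteq M^{\rm op}$ are C$^\ast$-subalgebras, the induced map $B \minotimes B^{\rm op} \hookrightarrow M \minotimes M^{\rm op}$ is isometric. For the lower bound I would pass through the auxiliary quotient from the definition of $\cK_M^L = \ker Q_1$: since $\cK_M \subseteq \cK_M^L$ and $Q_1$ has norm $\leq 1$, one gets $\|Q_1(T)\|_{\cB(M,L^2(M))/\cK(M,L^2(M))} \leq \|T + \cK_M\|_{\mathcal{M}(\cK_M)/\cK_M}$. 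So it suffices to show that $\|Q_0(T) + K\|_{\cB(M, L^2(M))} \geq (1+\delta)\alpha$ for every $K \in \cK(M, L^2(M))$.

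To prove this, left- and right-$B$-invariance of each $M_i$ makes $T$ preserve $M_i$, and hypothesis (\ref{Eqn=NonContraction}) then yields $x_i \in M_i$ with $\|x_i\|_M \leq 1$ and $\|T(x_i)\|_2 \geq (1+\delta)\alpha - 1/i$. Let $P_i$ denote the orthogonal projection of $L^2(M)$ onto $L^2(M_i) := \overline{M_i}^{\|\cdot\|_2}$; these projections are pairwise orthogonal by $\tau$-orthogonality of the $M_i$. For any $K \in \cK(M, L^2(M))$, compactness applied to the bounded sequence $(x_i) \subset M$ produces a subsequence with $K(x_{i_k}) \to y$ in $L^2(M)$; Bessel then gives $\sum_k \|P_{i_k} y\|_2^2 \leq \|y\|_2^2$, hence $\|P_{i_k} y\|_2 \to 0$, and combined with $\|K(x_{i_k}) - y\|_2 \to 0$ this yields $\|P_{i_k} K(x_{i_k})\|_2 \to 0$. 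Using $T(x_{i_k}) \in L^2(M_{i_k})$ one obtains
\[
\|Q_0(T) + K\|_{\cB(M, L^2(M))} \geq \|P_{i_k}((T+K)(x_{i_k}))\|_2 \geq \|T(x_{i_k})\|_2 - \|P_{i_k} K(x_{i_k})\|_2,
\]
whose limit is $(1+\delta)\alpha$, producing the contradiction $(1+\delta)\alpha \leq \alpha$. I expect the main obstacle to be conceptual rather than computational: one must recognise that even though $\cK_M$ is significantly larger than $\cK(L^2(M))$, the one-sided compactness $M \to L^2(M)$ forced by $\cK_M^L$ is enough, because orthogonality of the $L^2(M_i)$'s inside $L^2(M)$ is exactly what eliminates the compact perturbation along the chosen test sequence.
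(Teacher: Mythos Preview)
Your proof is correct and follows the same global outline as the paper: both obtain the contradiction by bounding $\Vert T + \cK_M \Vert_{\mathcal{M}(\cK_M)/\cK_M}$ from below via the intermediate quotient $\cB(M,L^2(M))/\cK(M,L^2(M))$, using $\cK_M \subseteq \cK_M^L = \ker Q_1$ and contractivity of $Q_0$. The genuine difference is in how the key estimate $\Vert Q_0(T)+K\Vert_{\cB(M,L^2(M))}\geq(1+\delta)\alpha$ is obtained for compact perturbations. The paper first treats a special class $X$ of finite rank operators whose support lies in finitely many $L^2(M_i)$'s (so that restriction to a fresh $M_k$ kills the perturbation exactly), and then invokes Ozawa's density result $\overline{Q_0(\cK(L^2(M)))}=Q_0(\cK_M^L)$ to pass to general $x\in\cK_M^L$. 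You instead handle an arbitrary $K\in\cK(M,L^2(M))$ in one stroke: extract a norm-convergent subsequence $K(x_{i_k})\to y$, use Bessel on the mutually orthogonal projections $P_{i_k}$ to force $\Vert P_{i_k}y\Vert_2\to 0$, and conclude $\Vert P_{i_k}K(x_{i_k})\Vert_2\to 0$. Your route is slightly more self-contained, since it does not appeal to \cite[Section 2, Proposition]{OzawaNote} and does not require any density of finite rank operators of the special form $X$; the paper's route, on the other hand, makes the reduction to an exactly vanishing perturbation on a single $M_k$ more transparent.
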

\begin{proof}
Let $X$ be the set of finite rank operators $x \in \cB(L^2(M))$ such that there exists $I_x \subseteq I$ finite with $\ker(x)^\perp \subseteq \oplus_{i \in I_x} L^2(M_i)$. Take $x \in X$   and choose $k \in I \backslash I_x$. Then,
\[
\begin{split}
 \Vert Q_0(\sum_{j} b_j c_j^{op} + x) \Vert_{\cB(M, L_2(M) )  }  \geq &
\Vert Q_0(\sum_{j} b_j c_j^{op} + x) \Vert_{\cB(M_k, L^2(M_k) )  } \\
= &
\Vert Q_0(\sum_{j} b_j c_j^{op}) \Vert_{\cB(M_k, L^2(M_k) )  } \\
\geq & (1 + \delta)  \Vert \sum_j b_j \otimes c_j^{\op} \Vert_{B \minotimes B^{\op}}.
\end{split}
\]
The operators in $X$ are norm dense in $\cK( L^2(M) )$ and by \cite[Section 2, Proposition]{OzawaNote} we have that $Q_0(\cK( L^2(M) ))$ is dense in $Q_0(\cK^L_M)$ in the norm of $\cB(M, L^2(M))$. As $Q_0$ is contractive $Q_0(X)$ is dense in $Q_0(\cK^L_M)$.  It therefore follows that for any $x \in \cK_M^L$ we have
\[
 \Vert Q_0(\sum_{j} b_j c_j^{op} + x) \Vert_{\cB(M, L_2(M) )  }  \geq (1 + \delta)  \Vert \sum_j b_j \otimes c_j^{\op} \Vert_{B \minotimes B^{\op}}.
\]
Since $Q_0$ is contractive for every $x \in \cK_M^L$ we have,
\[
 \Vert \sum_{j} b_j c_j^{op} + x \Vert_{\cB(L^2(M) )  }  \geq (1 + \delta)  \Vert \sum_j b_j \otimes c_j^{\op} \Vert_{B \minotimes B^{\op}}.
\]
Hence, certainly for the Banach space quotient norm we have
\[
 \Vert \sum_{j} b_j c_j^{op} + \cK_M  \Vert_{\cB(L^2(M) ) \slash \cK_M  }  \geq (1 + \delta)  \Vert \sum_j b_j \otimes c_j^{\op} \Vert_{B \minotimes B^{\op}}.
\]
As the left hand side norm is the norm of the C$^\ast$-quotient $\mathcal{M}(\cK_M) \slash \cK_M$ this concludes the proof (see \cite[Proposition 2.1]{Lance} as in the preliminaries).
\end{proof}

The proof of the following theorem essentially repeats its C$^\ast$-algebraic counterpart from \cite[Theorem 3.3]{BCKW}.

\begin{thm}\label{Thm=AntiAOGaussian}
Assume $\dim(\RH) = \infty$ and $-1 < q < 1, q \not = 0$. Then the von Neumann algebra $M_q(\RH)$ does not have W$^\ast$AO.
\end{thm}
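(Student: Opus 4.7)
The strategy is to verify the hypotheses of Theorem~\ref{Thm=AntiAO} for $M = M_q(\RH)$. Thus I need to exhibit a unital von Neumann subalgebra $B \subseteq M_q(\RH)$, an infinite family $(M_n)_{n \in \N}$ of mutually $\tau$-orthogonal left- and right-$B$-invariant subspaces of $M_q(\RH)$, and finitely many $b_j, c_j \in B$ with $\sum_j b_j \otimes c_j^{\op} \neq 0$ witnessing the strict non-contraction estimate \eqref{Eqn=NonContraction} with a uniform $\delta > 0$.

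For the bimodule data, I would orthogonally decompose $\RH = \RK \oplus \bigoplus_{n \in \N} V_n$ with $\RK$ and each $V_n$ infinite-dimensional real subspaces, and set $B := M_q(\RK)$; this is a unital von Neumann subalgebra by functoriality of the $q$-Gaussian construction. For each $n$, let $M_n \subseteq M_q(\RH)$ be the linear span of the Wick operators $W_q(\xi_1 \otimes \cdots \otimes \xi_m)$ with all $\xi_j \in (\RK \oplus V_n) + i(\RK \oplus V_n)$ and at least one $\xi_j \in V_n + i V_n$. The mutual $\tau$-orthogonality follows from the orthogonality of the $V_n$'s together with the explicit $q$-symmetrized inner product on $H_q^{\otimes m}$, and the inclusion $B \cdot M_n \cdot B \subseteq M_n$ is immediate from the Wick product rule, since multiplication by a Wick operator with letters in $\RK + i\RK$ only inserts or contracts letters from $\RK + i\RK$ and leaves the $V_n$-letters untouched.

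The remaining and decisive step is to choose $b_j, c_j \in B$ so that the action of $\sum_j b_j c_j^{\op}$ on each $M_n$ exceeds the min tensor norm of $\sum_j b_j \otimes c_j^{\op}$ by a fixed factor, uniformly in $n$. For this I would lift verbatim the construction from the proof of \cite[Theorem 3.3]{BCKW}: one fixes a finite orthonormal family $\zeta_1, \ldots, \zeta_N \in \RK$ and a specific finite linear combination of creation and annihilation operators $l_q(\zeta_k), l_q^\ast(\zeta_k)$, whose $q$-commutation relation $l_q(\xi) l_q^\ast(\eta) - q l_q^\ast(\eta) l_q(\xi) = \langle \eta, \xi \rangle$ produces, in the diagonal action $b_j c_j^{\op}$ on a Wick vector in $M_n$, an extra $q$-dependent contribution that is absent from the spatially separated minimal tensor action.

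The main obstacle is precisely this quantitative comparison of the two norms with an explicit uniform gap, but this is exactly the content of the C$^\ast$-algebraic argument of \cite{BCKW}; transporting it to the present bimodule setting gives the required $\delta > 0$ independent of $n$, and Theorem~\ref{Thm=AntiAO} then delivers the failure of W$^\ast$AO.
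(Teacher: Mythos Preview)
Your strategy---verify the hypotheses of Theorem~\ref{Thm=AntiAO} by importing the estimate from \cite[Theorem~3.3]{BCKW}---is exactly what the paper does, but your setup is over-engineered and your account of the decisive estimate has a gap.

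The paper takes $B = M_q(\mathbb{R}^d\oplus 0)$ for a \emph{finite} $d$ with $q^2 d > 1$, and single orthogonal vectors $f_i \in 0\oplus\RH$ with $M_{q,i} := B\, W_q(f_i)\, B$. This is automatically $B$-$B$-invariant, whereas your linear span $M_n$ of Wick words is only invariant under the $\ast$-algebra generated by $\{W_q(\xi):\xi\in\RK\}$, not a priori under its weak closure $B$. Infinite-dimensional $\RK$ and $V_n$ play no role and only obscure the picture.

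The more serious issue is the norm comparison. Inequality \eqref{Eqn=NonContraction} does not follow from the $q$-commutation relation ``producing an extra contribution'': it requires two separate estimates. The lower bound is the explicit computation $\bigl\langle \sum_{j\in J_k} W_q(\xi_j)^\ast\, W_q(f_i)\, W_q(\xi_j)\,\Omega,\ f_i\bigr\rangle_q = q^k d^k$, where $\xi_j$ runs over a $q$-orthonormal system of length-$k$ tensors over $\mathbb{R}^d$. The upper bound on $\bigl\Vert \sum_j W_q(\xi_j)^\ast \otimes W_q(\xi_j)^{\op}\bigr\Vert_{\min}$ is Nou's Haagerup-type inequality \cite{Nou}, of order $(k+1)^2 d^{k/2}$. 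The constraint $q^2 d>1$ is precisely what makes the ratio $|q|^k d^{k/2}$ diverge as $k\to\infty$. You mention neither Nou's inequality nor any constraint linking your $N$ to $q$; without both, the argument does not close.
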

\begin{proof}
Let $d \geq 2$ be such that $q^2 d > 1$.  Let
\[
M := M_q(\mathbb{R}^d \oplus \RH), \qquad  B := M_q(\mathbb{R}^d \oplus 0).
\]
Let $\{ f_i \}_i$ be an infinite set of orthogonal vectors in $0 \oplus \RH$  such that $\Vert W_q(f_i) \Vert = 1$. Let $M_{q,i} :=  B W_q(f_i) B$ which is a $B$-$B$ invariant subset of $M$. Then $M_{q,i}$ and $M_{q,j}$ are $\tau_\Omega$-orthogonal if $i \not = j$. For $k \in \mathbb{N}$ let
\[
\cB(k)  = \{ W_q(\xi) \mid \xi \in (\mathbb{R}^d \oplus 0)^{\otimes k} \}.
\]
It is proved in \cite[Equation (3.2)]{BCKW} that for $b,c \in \cB(k)$ we have
\[
 \langle b W_q(f_i) c \Omega, f_i \rangle_q =    \langle b c^{\op} f_i , f_i \rangle_q = q^k \langle b c^{\op} \Omega , \Omega \rangle_q.
\]
 Then for finitely many $b_j, c_j \in \cB(k)$ we have
\begin{equation}\label{Eqn=LowerEstimate}
\Vert Q_0(\sum_j b_j c_j^{\op}) \Vert_{\cB(M_{q,i}, L^2(M_{q,i}))} \geq
\Vert \sum_j b_j W_q(f_i) c_j \Vert_{ L^2(M_{q,i})} \geq
\vert \langle \sum_j b_j W_q(f_i) c_j \Omega, f_i \rangle_q \vert =
\vert \sum_j q^k \langle b_j \Omega c_j, \Omega \rangle_q \vert.
\end{equation}

Now let $\{ e_1, \ldots, e_d \} $ be an orthonormal basis of $\mathbb{R}^d \oplus 0$ and for $j = (j_1, \ldots, j_k) \in \{ 1,\ldots, d \}^k$ let $e_j = e_{j_1} \otimes \ldots \otimes e_{j_k}$.  Let $J_k$ be the set of all such multi-indices of length $k$. So $\# J_k = d^ k$.  Set $\xi_j = (P^k_q)^{- \frac{1}{2} } e_j$ so that $\langle \xi_j, \xi_j \rangle_q = \langle P_q^k \xi_j, \xi_j \rangle = 1$.

Now \eqref{Eqn=LowerEstimate} yields that for all $k \geq 1$ and all $i$,
\[
\begin{split}
\Vert Q_0(\sum_{j \in J_k}  W_q(\xi_j)^\ast    W_q(\xi_j)^{\op}) \Vert_{\cB(M_{q,i} ,  L^2(M_{q,i}) )} \geq &  \sum_{j \in J_k} q^k \langle W_q(\xi_j)^\ast \Omega W_q(\xi_j), \Omega \rangle_q =
\sum_{j \in J_k}  q^k \langle \Omega W_q(\xi_j),  W_q(\xi_j) \Omega \rangle_q\\
 = & \sum_{j \in J_k}  q^k  \langle \xi_j,  \xi_j \rangle_q =    q^k d^{k}.
\end{split}
\]
From \cite[Proof of Theorem 2]{Nou} (or see \cite[Proof of Theorem 3.3]{BCKW}) we find,
\[
\Vert \sum_{j \in J_k} W_q(\xi_j)^\ast \otimes  W_q(\xi_j)^{\op} \Vert_{B \minotimes B^{\op}} \leq ( \prod_{i=1}^\infty (1 - q^i)^{-1}  )^3 (k+1)^2  d^{k/2}.
\]
Therefore, as  $q^2 d  > 1$  there exists $\delta > 0$ such that for  $k$ large enough we have for every $i$,
\[
\Vert Q_0(\sum_{j \in J_k} W_q(\xi_j)^\ast    W_q(\xi_j)^{\op}) \Vert_{\cB(M_{q,i}, L^2(M_{q,i})   ) }  \geq   (1+ \delta)    \Vert \sum_{j \in J_k} W_q(\xi_j)^\ast \otimes  W_q(\xi_j)^{\op} \Vert_{B \minotimes B^{\op}}.
\]
Hence the assumptions of Theorem \ref{Thm=AntiAO} are witnessed which shows that W$^\ast$AO does not hold.
\end{proof}

\begin{cor}
Let $\RH$ be an infinite dimensional real separable Hilbert space. The von Neumann algebras $\Gamma_0(\RH)$ and $\Gamma_q(\RH)$ with $-1 < q < 1, q \not = 0$ are non-isomorphic.
\end{cor}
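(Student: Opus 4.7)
The strategy is simply to combine the two results already established in the paper, once we know that W$^\ast$AO is a $\ast$-isomorphism invariant of finite von Neumann algebras. Remark \ref{Remark=WAOZero} gives that $\Gamma_0(\RH) \simeq \cL(\mathbb{F}_\infty)$ has W$^\ast$AO, whereas Theorem \ref{Thm=AntiAOGaussian} states that $\Gamma_q(\RH) = M_q(\RH)$ does \emph{not} have W$^\ast$AO when $q \neq 0$. Assuming invariance of the property, the two algebras cannot be isomorphic, which is exactly the claim.

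The only real content of the proof, therefore, is to verify that W$^\ast$AO is preserved under $\ast$-isomorphism. For this I would argue as follows. A trace-preserving $\ast$-isomorphism $\pi : M \to N$ of finite von Neumann algebras (and by uniqueness of the faithful normal tracial state on a II$_1$ factor, or by composing with a suitable scaling, any isomorphism can be arranged to preserve the trace) induces a unitary $U : L^2(M) \to L^2(N)$ by $U(x\Omega_M) = \pi(x)\Omega_N$ which implements $\pi$ and intertwines the modular conjugations $J_M$ and $J_N$. Consequently $\operatorname{Ad}(U) : \cB(L^2(M)) \to \cB(L^2(N))$ is a $\ast$-isomorphism carrying $M$ to $N$ and $M^{\op}$ to $N^{\op}$.

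Next, one observes that the ingredients in the definition of $\cK_M$ are all expressed in terms of the pair $(M, L^2(M))$: the space $\cB(M, L^2(M))$, the subspace $\cK(M, L^2(M))$ of compact operators between these Banach spaces, and the map $Q_0$. Under $\operatorname{Ad}(U)$ these data are mapped bijectively to their $N$-counterparts, so $\operatorname{Ad}(U)(\cK_M^L) = \cK_N^L$ and $\operatorname{Ad}(U)(\cK_M) = \cK_N$, and therefore also $\operatorname{Ad}(U)(\mathcal{M}(\cK_M)) = \mathcal{M}(\cK_N)$. The induced quotient map $\overline{\operatorname{Ad}(U)} : \mathcal{M}(\cK_M)/\cK_M \to \mathcal{M}(\cK_N)/\cK_N$ intertwines the candidate maps $\theta_M$ and $\theta_N$ with $\pi \minotimes \pi^{\op} : M \minotimes M^{\op} \to N \minotimes N^{\op}$. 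Since $\pi \minotimes \pi^{\op}$ is an isomorphism of C$^\ast$-algebras, continuity of $\theta_M$ with respect to the minimal tensor norm is equivalent to continuity of $\theta_N$. Hence W$^\ast$AO is an isomorphism invariant.

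The only obstacle worth flagging is making sure the trace-preservation hypothesis is not actually needed: if $M$ and $N$ are both $q$-Gaussian algebras, then they are II$_1$ factors and any $\ast$-isomorphism automatically preserves the unique normal tracial state, so one can apply the argument above directly. Granted the invariance, the corollary follows immediately: if $\Gamma_0(\RH) \simeq \Gamma_q(\RH)$ with $q \neq 0$, then $\Gamma_q(\RH)$ would inherit W$^\ast$AO from $\Gamma_0(\RH)$, contradicting Theorem \ref{Thm=AntiAOGaussian}.
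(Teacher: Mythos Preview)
Your proposal is correct and follows exactly the same approach as the paper: combine Remark \ref{Remark=WAOZero} (that $\Gamma_0(\RH)$ has W$^\ast$AO) with Theorem \ref{Thm=AntiAOGaussian} (that $\Gamma_q(\RH)$ does not, for $q \neq 0$), and conclude via isomorphism-invariance of W$^\ast$AO. The paper simply asserts this invariance in one clause, whereas you spell out the standard form/unitary implementation argument in detail; this extra justification is sound and adds nothing beyond what the paper takes as evident.
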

\begin{proof}
This is a consequence of Theorem \ref{Thm=AntiAOGaussian} and Remark \ref{Remark=WAOZero} as  W$^\ast$AO is preserved under isomorphism.
\end{proof}


\begin{thebibliography}{9}


\bibitem[BCKW22]{BCKW}
   M. Borst, M. Caspers, M. Klisse, M. Wasilewski,
  \emph{On the isomorphism class of $q$-Gaussian C$^\ast$-algebras for infinite variables},
  to appear in Proceedings of the American Mathematical Society, arXiv: 2202.13640.


\bibitem[BoSp91]{BS}
  M. Bo\.{z}ejko, R. Speicher,
  \emph{An example of a generalized Brownian motion},
   Comm. Math. Phys. {\bf 137} (1991), no. 3, 519--531.

\bibitem[BKS97]{BKS}
   M. Bo\.{z}ejko, B. K\"ummerer, R. Speicher,
   \emph{$q$-Gaussian processes: non-commutative and classical aspects},
    Comm. Math. Phys. {\bf 185} (1997), no. 1, 129--154.




\bibitem[BrOz08]{BrownOzawa}
   N. Brown, N. Ozawa,
  \emph{C$^\ast$-algebras and finite-dimensional approximations},
    Graduate Studies in Mathematics, 88. American Mathematical Society, Providence, RI, 2008. xvi+509 pp.




\bibitem[Dab14]{Dabrowski}
   Y. Dabrowski,
   \emph{A free stochastic partial differential equation},
   Ann. Inst. Henri Poincaré Probab. Stat. {\bf 50} (2014), no. 4, 1404--1455.


\bibitem[DKP22]{PetersonEtAl}
   C. Ding, S. Kunnawalkam Elayavalli, J. Peterson,
   \emph{Properly Proximal von Neumann Algebras},
  arXiv:2204.00517.


\bibitem[DNV92]{DykVoic}
   K. Dykema, A. Nica, D. Voiculescu,
   \emph{Free Random Variables},
       in the series CRM Monograph Series (volume 1), American Mathematical Society, Providence (1992).



\bibitem[GuSh14]{GS}
   A. Guionnet, D. Shlyakhtenko,
   \emph{Free monotone transport},
   Invent. Math. {\bf 197} (2014), no. 3, 613--661.



\bibitem[Lan95]{Lance}
  E. Lance,
  \emph{Hilbert C$^\ast$-modules,
   A toolkit for operator algebraists.}
    London Mathematical Society Lecture Note Series, 210. Cambridge University Press, Cambridge, 1995. x+130 pp.


\bibitem[NeZe18]{NelsonZeng}
   B. Nelson, Q. Zeng,
   \emph{Free monotone transport for infinite variables},
   Int. Math. Res. Not. IMRN 2018, no. 17, 5486--5535.



\bibitem[Nou04]{Nou}
    A. Nou,
    \emph{Non injectivity of the $q$-deformed von Neumann algebra},
    Math. Ann. {\bf 330} (2004), no. 1, 17--38.

\bibitem[Oza10]{OzawaNote}
   N. Ozawa,
  \emph{A comment on free group factors},
  Noncommutative harmonic analysis with applications to probability II, 241–-245, Banach Center Publ., 89, Polish Acad. Sci. Inst. Math., Warsaw, 2010.





\bibitem[Tak79]{Takesaki1}
  M. Takesaki,
  \emph{Theory of operator algebras. I.}
   Springer-Verlag, New York-Heidelberg, 1979. vii+415 pp.


\end{thebibliography}
\end{document}